\providecommand{\U}[1]{\protect\rule{.1in}{.1in}}
\newcommand{\rem}[1]{}
\newtheorem{theorem}{Theorem}
\theoremstyle{plain}
\newtheorem{definition}[theorem]{Definition}
\newtheorem{example}[theorem]{Example}
\newtheorem{lemma}[theorem]{Lemma}
\newtheorem{remark}[theorem]{Remarks}
\numberwithin{equation}{section}
\def\l{\lambda}
\def\M{\mathcal{M}}
\def\MS{\mathcal{MS}}
\def\GL{\mathcal{GL}}
\begin{document}
\title[Simultaneous diagonalization via congruence]{Solving the problem of simultaneous diagonalization of complex symmetric matrices via congruence}
\author{Miguel D. Bustamante}
\address{School of Mathematics and Statistics, University College Dublin, Dublin 4, Ireland}
\email{miguel.bustamante@ucd.ie}
\author{Pauline Mellon}
\address{School of Mathematics and Statistics, University College Dublin, Dublin 4, Ireland}
\email{pauline.mellon@ucd.ie}
\author{M. Victoria Velasco}
\address{Departamento de An\'{a}lisis Matem\'{a}tico, Facultad de Ciencias, Universidad
de Granada, \indent 18071 Granada, Spain}
\email{vvelasco@ugr.es}
\thanks{This work was partially supported by the Spanish Project MTM2016-76327-C3-2-P (AEI/FEDER,UE)}
\thanks{This work was also supported by the award of the Distinguished Visitor Grant of the School of Mathematics and Statistics, University College Dublin to the third author}
\date{09 July 2020}
\subjclass[2010]{ Primary  15A, 65K, 90C, 94A}
\keywords{Simultaneous Diagonalisation by Congruence, Simultaneous Diagonalisation by Similarity, Linear Pencil.}

\begin{abstract}
 We provide a solution to the problem of simultaneous
\emph{diagonalization via congruence} of a
given set of $m$  complex symmetric $n\times n$ matrices $\{A_{1},\ldots,A_{m}\}$,
 by showing that it
can be reduced to a possibly lower-dimensional problem where the question is rephrased
in terms of the classical problem of simultaneous \emph{diagonalization via
similarity} of a new related set of matrices. We provide a procedure to determine in a finite number of steps whether or not a set of matrices is simultaneously diagonalizable by congruence. This solves a long standing problem in the complex case.

\begin{center}
\textbf{This is a preprint of our work published in:\\
SIAM J. Matrix Anal. Appl., 41(4), 1616--1629 (October 2020).\\
\url{https://doi.org/10.1137/19M1280430}}
\end{center}

\end{abstract}

\maketitle

\section{Introduction}

The aim of this paper is to characterise when a given set of $n\times n$ complex
symmetric matrices, $A_{1},\ldots,A_{m}$, are simultaneously diagonalizable via
congruence  (SDC), namely, when there exists a non-singular $n\times n$ complex matrix $P$ such that 
$$P^TA_iP\ \hbox{ is diagonal, for all}\ 1\leq i\leq m.$$ 

Since Weierstrass in 1868
\cite{We} gave
sufficient conditions for the simultaneous diagonalization by congruence of two real symmetric matrices, several authors  \cite{Hi-U, Hi-Ur-Tor, Ji-Li} have extended those results and there has been applications of the real results to areas as diverse as
quadratic programming \cite{YGR, Yi, Be}, variational analysis \cite{Hi-Ma}, signal processing \cite{Li,Pham, Ti-Ye} and medical imaging analysis \cite{As, Car, So-Co, Wan}, among others. In the case of complex matrices, Hong, Horn and Johnson laid the framework in the 1980s {for the particular case of unitary transformations} by proving \cite{H-H-J1,H-H-J2} that
 there is a unitary $U$ satisfying $U^TA_iU$ is diagonal for all $i$ if, and only if, the set $\{A_i\overline{A_j}:1\leq i,j\leq m\}$ is a commuting family.  In {a more general} case they solved the problem for pairs of complex symmetric (or Hermitian) matrices with {the restriction that at least one of them be} non-singular.  We provide a solution in the general case {of complex symmetric matrices} by
translating it into a simpler problem, at a possibly reduced dimension, regarding
simultaneous diagonalizability by similarity of a new set of related matrices. 
We do this using the concept of matrix pencils so that the general problem is reduced to (possibly) lower dimensions  \emph{a priori} by calculating the intersection of the kernels of the matrices $A_1, \ldots, A_m$. Once this is done, reduced $r\times r$ matrices ($r\leq n$) $\widetilde{A}_i$ (Lemma \ref{kernels2} below) can be dealt with in a more standard manner, thanks to the existence of a non-singular matrix pencil. This allows us to obtain fairly simple necessary and sufficient conditions for SDC in Theorem \ref{main}  below.

The authors were initially motivated to tackle SDC for complex symmetric matrices by a problem that arose naturally in the area of evolution algebras.

We recall here that an evolution algebra is defined as a commutative
algebra $A$ {}{over $\mathbb{C}$} for which there exists a basis $\widetilde{B}=\{\widetilde{e}_{i}%
:i\in\Lambda\}$ such that $\widetilde{e}_{i} \widetilde{e}_{j} =0$, for every
$i,j\in\Lambda$ with $i\neq j$. In other words, the multiplication table of
$A$ relative to $\widetilde{B}$ is diagonal. Such a basis is called
natural. Evolution algebras were introduced in \cite{Tian} and \cite{Ti-Vo} in
the study of non-Mendelian genetics and are, in
general, not associative. The problem concerning the authors was to determine when a given algebra $A$ is an evolution algebra. In other words, if $B$ is a basis of $A$ and
the multiplication table of $A$ with respect to $B$ is not diagonal, we established the conditions under which there exists a natural basis $\widetilde{B}$ of $A$, giving $A$ the structure of an evolution algebra.  If
$B=\{e_{1},...,e_{n}\}$ and%
\begin{equation}
\label{eq:multiplication_rule}
e_{i}e_{j}={\sum\limits_{k=1}^{n}}m_{ijk}\,e_{k}\,,\quad i,j=1,\ldots,n,
\end{equation}
we define the \emph{structure matrices} of $A$ with respect to $B$ as the $n\times n$
matrices $M_{k}(B)=(m_{ijk})_{1\leq i,j\leq n},$ for $k=1,\ldots,n.$ {}{Notice that the  structure matrices $M_{k}(B)$ are symmetric and complex because $A$ is commutative and its base field is $\mathbb{C}$.} A main result in \cite{Bu-Me-Ve1} proves that $A$ is an evolution algebra if, and only if,
{}{the complex symmetric matrices} $M_{1}(B), \ldots ,M_{n}(B)$ are simultaneously diagonalizable via congruence.

{There are other areas where the complex results might be applied. One of the most important applications is in the area of signal processing, in particular in the classical problem of blind source separation \cite{belouchrani1997blind, yeredor2000blind, yeredor2002non}. In its simplest form, and appropriate to our notation, this problem amounts to finding a nonsingular complex matrix $Q$ relating $n$ sets of measurements (denoted by the complex random vector $x$ of dimension $n$) and $n$ statistically independent, \emph{but unknown}, sources (denoted by the complex random vector $s$ of dimension $n$), via the linear relation $x = Q^* s$, where $Q^*$ denotes the conjugate transpose of $Q$. {To investigate how this relates to the SDC problem} we discuss a method introduced in \cite{yeredor2000blind}. Consider a generalised second characteristic function, defined in terms of the $x$ variables as
$$\psi_x(\tau) := \ln E[\exp(\tau^T \overline{x})], \qquad \tau \in \mathbb{C}^n\,,$$
where bar denotes complex conjugation and $E$ denotes the expectation. In terms of the $s$ variables, this reads
$$\psi_x(\tau) = \ln E[\exp((Q\tau)^T \overline{s})] =: \psi_s(\mu)\,, \qquad \mu = Q \tau\,.$$
Let us consider $m$ so-called ``processing points'' $\tau^{(1)}, \ldots, \tau^{(m)}$. We now define the following complex symmetric matrices $A_1, \ldots, A_m$ by their components:
$$(A_j)_{k\ell} := \left.\frac{\partial^2 \psi_x(\tau)}{\partial \tau_k \partial\tau_\ell}\right|_{\tau = \tau^{(j)}} , \qquad k, \ell = 1, \ldots, n, \qquad j = 1, \ldots, m.$$ 
It is then easy to show 
$$A_j = Q^T D_j Q\,,\qquad (D_j)_{k\ell} := \left.\frac{\partial^2 \psi_s(\mu)}{\partial \mu_k \partial\mu_\ell}\right|_{\mu = P\tau^{(j)}} , \,\,\,\quad k, \ell = 1, \ldots, n, \quad j = 1, \ldots, m,$$
where the matrices  $D_j$ are diagonal, due to the statistical independence of the components of $s$. This is, of course, the SDC problem, and its solution provides the complex matrix $Q$ that allows one to unveil the unknown independent sources starting from an arbitrary set of measurements. In real-life applications, experimental or numerical errors will lead to matrices $A_1, \ldots, A_m$  that are not exactly SDC, so ``approximate joint diagonalization'' is the correct concept, which consists of the variational problem of finding a complex nonsingular matrix $P$ such that $P^T A_j P$ is as diagonal as possible, in some metric (see, for example, \cite{belouchrani1997blind, co-di, yeredor2002non}).

In summary, the solution to the SDC problem provided extends earlier 
results from the 1980s, solves the initial motivating question for the authors related to evolution algebras and may have 
an impact on applications in optimisation or signal
processing as described above.}

 In Section \ref{sec:2} we provide notation and definitions. In Section \ref{sec:3} we solve the SDC problem in the case of complex symmetric matrices and present a finite step procedure to determine whether a given set of matrices is SDC or not. In Section \ref{sec:4} we discuss possible avenues of further research. 
\section{Notation}
\label{sec:2}

Let $\mathcal{M}_{n,m}$ denote all $n\times m$ matrices over $\mathbb{C}$. Let $\mathcal{M}_{n}:=\mathcal{M}_{n,n}$, let
$\mathcal{MS}_{n}$ be all symmetric elements in $\mathcal{M}_{n}$ and let
$\mathcal{GL}_{n}$ be all invertible elements in $\mathcal{M}_{n}$. A diagonal matrix in $\M_n$ with diagonal entries $d_1, \ldots, d_n$ will be written as $D=\mathrm{\mathrm{diag}}\big(d_1,\ldots, d_n\big)$. For $A \in \M_n$ we denote its $ij$ component by $A_{ij}$ or $\big(A\big)_{ij}$ and the zero and identity element in $\mathcal{M}_{n}$ are denoted $0_n$ and $I_n$, respectively.
We recall that $A \in \M_n$ is said to be orthogonal if $A^T=A^{-1}$ (where $A^T$ denotes the usual transpose of $A$) and is said to be unitary if $\bar{A}^T = A^{-1}$ (where $\bar{A}$ denotes the entrywise complex conjugate of $A$); matrices $A,B \in \M_n$ are said to be congruent if there exists $P \in \GL_n$ such that $P^TAP=B$ and are said to be similar if there exists $P \in \GL_n$ such that $P^{-1}AP=B$. Congruent (or similar) matrices have the same rank. In fact, $A$ and $B$ are congruent if, and only if, they have the same rank \cite[Theorem 4.5.12]{Horn}, hence $A$ similar to $B$ implies $A$ congruent to $B$ but the converse does not hold. We
introduce the following definitions for a set of matrices in $\mathcal{M}_{n}$.

\begin{definition}
\label{def:1}
Let $A_{1},\ldots,A_{m} \in \M_n$. We say $A_{1},\ldots,A_{m}$ are simultaneously
diagonalizable via congruence (SDC for short) if 
there exists $P\in \GL_{n}$ and diagonal matrices $D_{1},\ldots,D_{m} \in \mathcal{M}_{n}
$ such that %
\[
P^{T}A_{j}P=D_{j},\quad j=1,\ldots,m\,.
\]
Of course, if $A_{1},\ldots,A_{m}$ are SDC then they are necessarily symmetric.

\end{definition}

\begin{definition}
\label{def:2}
Let $L_{1},\ldots,L_{m} \in \M_n$. We say
$L_{1},\ldots, L_{m}$ are simultaneously diagonalizable via
similarity (SDS for short) if there exists $P\in
\GL_{n}$  and diagonal matrices $D_{1},\ldots,D_{m} \in \M_n$ such that %
\[
P^{-1}L_{j}P=D_{j},\quad j=1,\ldots,m\,.
\]

\end{definition}

It is important to remark that even when  $A_1, \ldots, A_m$ in  Definition \ref{def:1} or  $L_1, \ldots, L_m$ in  Definition \ref{def:2} are real, the resulting matrices $P$ and $D_j$ may have to be complex, as illustrated in Example \ref{complexneeded} below.  The following result is well known (see for instance \cite[Theorems 1.3.12 and 1.3.21]{Horn})
and  means that SDS is easy to check in practice, in contrast to SDC.

\begin{theorem}
\label{well}  Let $L_1, \ldots, L_m \in \mathcal{M}_n$. These matrices are simultaneously diagonalizable by similarity (SDS) if, and only if, they are all diagonalizable  by similarity and they pairwise commute. 
\end{theorem}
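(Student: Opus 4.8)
The plan is to prove the two implications separately, the forward one being essentially immediate and the converse requiring a short inductive argument on $m$.

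For the forward direction, suppose $P\in\GL_n$ and diagonal matrices $D_1,\ldots,D_m$ satisfy $P^{-1}L_jP=D_j$. Then each $L_j=PD_jP^{-1}$ is by definition diagonalizable by similarity, and for any $j,k$ we have $L_jL_k=PD_jD_kP^{-1}=PD_kD_jP^{-1}=L_kL_j$, since diagonal matrices commute; hence the $L_j$ pairwise commute.

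For the converse I would argue by induction on $m$. The case $m=1$ is the hypothesis that $L_1$ is diagonalizable. Assuming the result for $m-1$ commuting diagonalizable matrices, pick $P\in\GL_n$ with $P^{-1}L_jP=D_j$ diagonal for $j=1,\ldots,m-1$, and set $L:=P^{-1}L_mP$; it is diagonalizable (being similar to $L_m$) and commutes with each $D_j$. The key observation is that commuting with all the $D_j$ forces $L$ to be block diagonal with respect to the partition of $\{1,\ldots,n\}$ into the classes on which the tuple $\big((D_1)_{ii},\ldots,(D_{m-1})_{ii}\big)$ is constant: indeed $(L)_{ik}\big((D_j)_{kk}-(D_j)_{ii}\big)=0$ for all $j$, so $(L)_{ik}=0$ whenever $i$ and $k$ lie in different classes. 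Each diagonal block of $L$ is the restriction of the diagonalizable operator $L$ to an invariant subspace, hence is itself diagonalizable, by the standard fact that the minimal polynomial of a restriction divides that of the whole operator and therefore has distinct linear factors. Diagonalizing each block separately yields a block-diagonal $Q\in\GL_n$ with $Q^{-1}LQ$ diagonal; being block diagonal for the same partition, $Q$ commutes with every $D_j$, so $Q^{-1}D_jQ=D_j$ remains diagonal. Then $PQ$ simultaneously diagonalizes $L_1,\ldots,L_m$ by similarity.

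The main obstacle — indeed the only point that is not pure bookkeeping — is the lemma that the restriction of a diagonalizable operator to an invariant subspace is again diagonalizable. One could alternatively phrase the whole converse without induction, by successively refining the eigenspace decomposition of $\mathbb{C}^n$ under $L_1$, then $L_2$, and so on, using commutativity to keep each subspace invariant under the remaining operators; this is the route taken in the references cited, so I would simply invoke it.
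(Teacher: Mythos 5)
Your proof is correct, but note a small contextual point: the paper does not prove Theorem \ref{well} at all --- it simply cites \cite[Theorems 1.3.12 and 1.3.21]{Horn} as this is a classical fact. So there is no ``paper's proof'' to compare against; you are supplying what the paper outsources.

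On its own merits your argument is sound. The forward direction is immediate, as you say. In the converse, the induction on $m$ works: after conjugating so that $L_1,\dots,L_{m-1}$ are already diagonal, the commutation relations $(L)_{ik}\bigl((D_j)_{kk}-(D_j)_{ii}\bigr)=0$ do force $L=P^{-1}L_mP$ to vanish off the blocks determined by the partition into maximal index classes on which the tuple $\bigl((D_1)_{ii},\dots,(D_{m-1})_{ii}\bigr)$ is constant. The key lemma you isolate --- that the restriction of a diagonalizable operator to an invariant subspace is diagonalizable, via the minimal-polynomial divisibility argument --- is exactly what is needed and is correctly applied to each block. Your final observation that a $Q$ which is block diagonal for this partition commutes with each $D_j$ (because each $D_j$ is scalar on every class, by construction of the partition) is also right, and closes the argument with $PQ$. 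Two cosmetic points you could tighten: (i) the classes need not be contiguous, so ``block diagonal'' should be read up to a preliminary permutation similarity, which costs nothing; (ii) you should say explicitly that $D_j$ is a \emph{scalar} multiple of the identity on each class, which is the reason $Q$ commutes with it. The alternative route you mention (refining the eigenspace decomposition operator by operator, or equivalently inducting on $n$ rather than $m$) is indeed the one in Horn and Johnson; your induction on $m$ is a legitimate and slightly more bookkeeping-friendly variant for a finite family.
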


\section{Solving the SDC\ problem}
\label{sec:3}
Let $S^{2m-1}:=\{x\in\mathbb{C}%
^{m}:\Vert x\Vert=1\}$, where $\left\Vert \cdot\right\Vert $ denotes the usual
Euclidean norm. We use the standard concepts of linear pencil and maximum pencil rank.

\begin{definition}
Let $A_{1}, \ldots, A_{m} \in \mathcal{M}_{n}$.
Define the associated linear pencil to be the map
\[
A: \mathbb{C}^{m}\longrightarrow \mathcal{M}_{n}\ \hbox{ by }\ A(\lambda) = \sum_{j=1}^{m}
\lambda_{j} A_{j},\ \hbox{where}\  \lambda = 
\begin{pmatrix}\lambda_1\\ \vdots \\\lambda_m\end{pmatrix} 
 \in \mathbb{C}^m.
\]

\end{definition}

Since $\mathrm{rank}A\left(  \lambda\right)  =\mathrm{rank}A\left(  \frac{\lambda}%
{\|\lambda\|}\right)  $, for $\lambda\neq0$, it follows that
\[
\sup_{\substack{\lambda\in\mathbb{C}^{m}}} \mathrm{rank}{A}(\lambda)=\sup
_{\substack{\lambda\in S^{2m-1}}} \mathrm{rank}{A}(\lambda).
\]

In addition, since $\{ \mathrm{rank}{A}(\lambda):\lambda \in  S^{2m-1}\} \subseteq \{0,1,\ldots,n\}$, it follows that the above supremum must be achieved. In other words, there exists some $\lambda_0 \in S^{2m-1}$ such that 
$$\sup_{\substack{\lambda\in\mathbb{C}^{m}}}\mathrm{rank}{A}(\lambda)=\sup
_{\substack{\lambda\in S^{2m-1}}} \mathrm{rank}{A}(\lambda)=\mathrm{rank}{A}(\lambda_0).$$

\begin{definition}
Let $A_{1},\ldots,A_{m} \in \mathcal{M}_{n}$.
The rank of the associated linear pencil is $r:=\sup_{\substack{\lambda\in\mathbb{C}^{m}}}\mathrm{rank}{A}(\lambda).$ We refer to $r$ as the (maximum pencil) rank of $A_{1}%
,\ldots,A_{m}$  and denote it as $r=\mathrm{rank}(A_{1},\ldots,A_{m}).$ In the case that $r=n$
we say that the pencil is non-singular.
From above, $r=\mathrm{rank}(A_{1},\ldots,A_{m})= \mathrm{rank}{A}(\lambda_0)$, for some $\lambda_{0}\in S^{2m-1}$.
\end{definition} 

The following simple lemma is important.

\begin{lemma}\label{intersections}
Let $A_{1},\ldots,A_{m} \in \mathcal{M}_{n}$ and let $r=\mathrm{rank}(A_{1},\ldots,A_{m})=\mathrm{rank} A(\lambda_{0})$, for some $\lambda_{0}\in S^{2m-1}$. Then
$$\dim\big(\bigcap_{j=1}^m \mathrm{ker} A_j\big)=n-r\ \text{ if, and only if,}\ 
\bigcap_{j=1}^m \mathrm{ker} A_j= \mathrm{ker} {A}(\lambda_0).$$
\end{lemma}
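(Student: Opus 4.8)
The plan is to reduce the stated equivalence to two elementary facts: the inclusion $\bigcap_{j=1}^m \ker A_j \subseteq \ker A(\lambda_0)$, which holds for \emph{every} $\lambda_0 \in \mathbb{C}^m$ and not just the rank-maximising one, together with the rank–nullity theorem applied to $A(\lambda_0)$.

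First I would check the inclusion: if $x \in \bigcap_{j=1}^m \ker A_j$, then $A(\lambda_0)x = \sum_{j=1}^m (\lambda_0)_j A_j x = 0$, so $x \in \ker A(\lambda_0)$; hence $\bigcap_{j=1}^m \ker A_j \subseteq \ker A(\lambda_0)$. Next, since $A(\lambda_0) \in \mathcal{M}_n$ has $\mathrm{rank}\,A(\lambda_0) = r$, the rank–nullity theorem gives $\dim \ker A(\lambda_0) = n - r$. With these two observations in hand the equivalence is immediate. For the forward implication, if $\dim\big(\bigcap_{j=1}^m \ker A_j\big) = n - r = \dim \ker A(\lambda_0)$, then a subspace contained in a finite-dimensional subspace of the same dimension must equal it, so $\bigcap_{j=1}^m \ker A_j = \ker A(\lambda_0)$. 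Conversely, if $\bigcap_{j=1}^m \ker A_j = \ker A(\lambda_0)$, then taking dimensions yields $\dim\big(\bigcap_{j=1}^m \ker A_j\big) = \dim \ker A(\lambda_0) = n - r$.

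There is essentially no obstacle here; the only thing worth emphasising is that the nontrivial content sits entirely in the reverse inclusion $\ker A(\lambda_0) \subseteq \bigcap_{j=1}^m \ker A_j$, which is precisely what can fail in general, and that for nested finite-dimensional subspaces this inclusion is detected purely by a dimension count. One could also note in passing that $\mathrm{rank}\,A_j = \mathrm{rank}\,A(e_j) \le r$ for each standard basis vector $e_j$, so $n - r \le \dim \ker A_j$ for all $j$ and hence $\dim\big(\bigcap_{j=1}^m \ker A_j\big) \le n - r$ always holds, which makes transparent why $n-r$ is the borderline value in the statement; but this remark is not needed for the proof.
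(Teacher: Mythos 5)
Your proof is correct and follows the same approach as the paper: establish the inclusion $\bigcap_{j=1}^m \ker A_j \subseteq \ker A(\lambda_0)$, apply rank--nullity to get $\dim \ker A(\lambda_0) = n-r$, and conclude by comparing dimensions of nested subspaces. The extra remark about $\dim\big(\bigcap_{j=1}^m \ker A_j\big) \le n-r$ being automatic is also exactly the inequality the paper records (its equation (\ref{kerdim})).
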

\begin{proof}
Clearly $\bigcap_{j=1}^m \mathrm{ker} A_j\subseteq \mathrm{ker} {A}(\lambda)$ and hence 
\begin{equation}\label{kerdim}
\dim\big(\bigcap_{j=1}^m \mathrm{ker} A_j\big) \leq n-\mathrm{rank}A(\l),\  \text{for all}\ \l  \in \mathbb{C}^m.
\end{equation}
In particular, for maximum pencil rank $r=\mathrm{rank} A(\l_0)$ we have 
\newline $\bigcap_{j=1}^m \mathrm{ker} A_j\subseteq \mathrm{ker} {A}(\lambda_0)$, \,and \,$\dim(\mathrm{ker} {A}(\lambda_0))=n-r$ then gives the result.
\end{proof}

We will see later that  $\dim\big(\bigcap_{j=1}^m \mathrm{ker} A_j\big)=n-r$ is necessary for 
$A_{1},\ldots,A_{m}$ to be SDC and, in this case, it follows  from  the above that the subspace $\mathrm{ker} {A}(\lambda_0)$ is actually independent of the point $\lambda_0$ satisfying $r=\mathrm{rank} A(\lambda_{0})$.

~\\
\subsection{The SDC problem for $n\times n$ matrices with non-singular pencils}
~\\

We now solve the SDC problem for symmetric matrices $A_{1},\ldots,A_{m}\in \mathcal{M}_{n}$, in the particular case that $\mathrm{rank}(A_{1}%
,\ldots,A_{m})=n$. The proof follows ideas from \cite[Theorem 4.5.17]{Horn},  \cite[Lemma 1]{Ji-Li} and \cite[p.230]{Uhlig}.
 In particular, the simple observation that if $A(\lambda)$ is invertible then
\begin{equation*}
(P^{T}A(\lambda)P)(P^{-1}A(\lambda)^{-1}A_{j}P)=P^{T}A_{j}P, \, \hbox{for}\  j= 1,\ldots, m, \ \hbox{and any}\ P \in \mathcal{GL}_n
\end{equation*} 
motivates our first main result and proves it in the obvious direction.

\begin{theorem}
\label{mainfullrank}Let ${A}_{1},\ldots,{A}_{m} \in \mathcal{MS}_{n}$ have maximum pencil rank $n$.
For any $\lambda_0 \in \mathbb{C}^{m}$ with
$
\mathrm{rank}A(\lambda_0)=
n$ then  
 $$ A_{1}, \ldots, A_m\  \hbox{are SDC if, and only if,}\  
A(\lambda_0)^{-1}A_{1}, \ldots, A(\lambda_0)^{-1}A_{m}\ \hbox{ are SDS}.$$

\end{theorem}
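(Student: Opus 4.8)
The plan is to prove both directions using the algebraic identity already highlighted before the statement, together with symmetry of the $A_j$. Fix $\lambda_0$ with $A(\lambda_0)$ invertible and write $B := A(\lambda_0) = \sum_j (\lambda_0)_j A_j$; note $B \in \mathcal{MS}_n$, so $B^T = B$ and also $(B^{-1})^T = B^{-1}$.

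For the forward direction, suppose there is $P \in \GL_n$ with $P^T A_j P = D_j$ diagonal for all $j$. Then $P^T B P = \sum_j (\lambda_0)_j D_j =: D_0$ is diagonal, and it is invertible since $B$ is; in particular every diagonal entry of $D_0$ is nonzero, so $D_0^{-1}$ is diagonal. Now compute $P^{-1} (B^{-1} A_j) P = (P^{-1} B^{-1} (P^T)^{-1})(P^T A_j P) = (P^T B P)^{-1} (P^T A_j P) = D_0^{-1} D_j$, which is diagonal. Hence $P^{-1}(B^{-1}A_j)P$ is diagonal for all $j$, i.e.\ the matrices $B^{-1}A_1, \ldots, B^{-1}A_m$ are SDS with the same $P$.

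For the converse, suppose $B^{-1}A_1, \ldots, B^{-1}A_m$ are SDS, so there is $Q \in \GL_n$ with $Q^{-1}(B^{-1}A_j)Q = E_j$ diagonal for all $j$. Taking the particular combination $\sum_j (\lambda_0)_j$ gives $Q^{-1}(B^{-1}B)Q = Q^{-1}Q = I_n = \sum_j (\lambda_0)_j E_j$, which is consistent but not yet enough; the real content is to upgrade the similarity $Q$ to a congruence. The idea, following \cite[Theorem 4.5.17]{Horn} and \cite[Lemma 1]{Ji-Li}, is: from $B^{-1}A_j Q = Q E_j$ we get $A_j Q = B Q E_j$, so $Q^T A_j Q = Q^T B Q E_j$ for all $j$. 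Thus it suffices to show $Q$ can be chosen so that $Q^T B Q$ is diagonal (hence invertible), for then $Q^T A_j Q = (Q^T B Q) E_j$ is a product of commuting diagonal matrices (commuting because $Q^T B Q = Q^T B Q E_0 E_0^{-1}$ and the $E_j$ pairwise commute as simultaneously diagonalized matrices; more directly, $Q^T B Q$ is diagonal so it commutes with every diagonal $E_j$), which is diagonal. To arrange $Q^T B Q$ diagonal: observe $Q^T B Q$ is symmetric, and it commutes with each diagonal $E_j$. Working within each joint eigenspace of the family $\{E_j\}$ — i.e.\ each block where all the $E_j$ are scalar — $Q^T B Q$ restricts to an invertible complex symmetric block, which by Takagi/Autonne factorization (or simply \cite[Corollary 4.4.4 or Theorem 4.4.4]{Horn}) can be diagonalized by a congruence $R^T(\cdot)R$ with $R$ orthogonal on that block; assembling these block transformations into a block-diagonal orthogonal $R$ that commutes with every $E_j$, and replacing $Q$ by $QR$, leaves all $(QR)^{-1}B^{-1}A_j(QR) = R^{-1}E_j R = E_j$ diagonal while making $(QR)^T B (QR) = R^T(Q^T B Q)R$ diagonal. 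Then $P := QR$ witnesses SDC of $A_1, \ldots, A_m$.

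The main obstacle is precisely this last step of the converse: passing from a simultaneous \emph{similarity} $Q$ to a simultaneous \emph{congruence} $P$. The similarity only controls $Q^{-1}(B^{-1}A_j)Q$; one must additionally exploit that $B$ and the $A_j$ are symmetric to show the residual freedom in choosing $Q$ within the joint eigenspaces of $\{B^{-1}A_j\}$ is exactly enough to diagonalize the symmetric bilinear form represented by $B$. Care is needed to check that the block-orthogonal correction $R$ genuinely commutes with all $E_j$ (so that conjugating by $R$ does not spoil the diagonality of the $E_j$) and that each block of $Q^T B Q$ is invertible (which follows from invertibility of $B$ and of $Q$). Everything else is routine manipulation of the identity $P^T A_j P = (P^T B P)(P^{-1}B^{-1}A_j P)$ specialized to diagonal factors.
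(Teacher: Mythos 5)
Your proof is correct and follows essentially the same route as the paper: use symmetry of $B=A(\lambda_0)$ and the $A_j$ to show $Q^T B Q$ commutes with the diagonal matrices $E_j$, conclude it is block-diagonal with respect to the joint eigenspaces of $\{E_j\}$, and then diagonalize each invertible symmetric block by congruence with a block-diagonal correction $R$ that commutes with every $E_j$; the paper does the identical thing, just inserting an explicit permutation matrix to sort the diagonal entries first. One small slip worth fixing: the Autonne--Takagi factorization gives a \emph{unitary} (not orthogonal) matrix on each block, but this is harmless since all the argument actually requires of $R$ is that it be block-diagonal and invertible.
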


\begin{proof} Let $\lambda_0 \in \mathbb{C}^{m}$ satisfy
$\mathrm{rank}A(\lambda_0)=
n$.
\newline
In the forward direction, we assume that $A_{1}, \ldots, A_m$ are SDC and 
let $P \in \mathcal{GL}_n$ satisfy $P^T A_j P$ is diagonal, for $j=1, \ldots, m$. Then
$\ P^{T}A(\lambda_0)P$  is diagonal and invertible giving
\[
P^{-1}A(\lambda_0)^{-1}A_{j}P=(P^{T}A(\lambda_0)P)^{-1}\big(P^{T}A_{j}P\big)
\]
is diagonal, for $1\leq j\leq m$ and we are done.

In the opposite direction, assume that  $A(\lambda_0)^{-1}A_{1}, \ldots, A(\lambda_0)^{-1}A_{m}$ are SDS and let $P \in \mathcal{GL}_n$ satisfy $D^{(j)} := P^{-1}A(\lambda_0)^{-1}A_{j}P$ is
diagonal, for $j=1, \ldots, m$. 
We define symmetric matrices $B_j  := P^{T}A_{j}P$ and $B(\lambda_0) := P^{T}A(\lambda_0) P$, to give
\begin{equation}
\label{eq:B_j=BD}
B_j = B(\lambda_0)D^{(j)} ,\quad j=1, \ldots, m.
\end{equation}
Taking the transpose of this latter equation implies $ B(\lambda_0)$ commutes with $D^{(j)}$, for $j=1, \ldots, m$. Component-wise, this means that for all $1\leq k,l \leq n,$ 
$$\big(B(\lambda_0)\big)_{k \ell}\big(D^{(j)}\big)_{\ell \ell} = \big(D^{(j)}\big)_{kk}\big(B(\lambda_0)\big)_{k \ell}, \  \text{for all} \ j=1, \ldots, m.$$
In particular, for all $1\leq k,l \leq n,$
\begin{equation}
\label{eq:off-diag}
\big(B(\lambda_0)\big)_{k \ell} = 0 \quad \text{if}\ \ \big(D^{(j)}\big)_{kk} \neq \big(D^{(j)}\big)_{\ell \ell}\  \text{for any}\  j=1, \ldots, m.
\end{equation}


Write $D^{(j)}=\mathrm{diag}(\alpha^j_1,\ldots,\alpha^j_n)$, for $1\leq j\leq m$, and let $p_j$ satisfy $$\alpha^j_1=\ldots=\alpha^j_{p_j}\neq \alpha^j_{p_j +1}$$ ($p_j$ is the length of the first run of identical diagonals in $D^{(j)}$) and define $n_1:=\min_{1\leq j\leq m} p_j.$ Define $\alpha^{(j)}_1:=\alpha^j_1$
and $\alpha^{(j)}_2:=\alpha^j_{n_1 +1}$ so that 
$$\alpha^{(j)}_1=\alpha^j_1=\ldots =\alpha^j_{n_1},\ \hbox{for all}\ \  1\leq j\leq m,$$ and we may write 
$$ D^{(j)}=\alpha_1^{(j)} I_{n_1} \oplus  \mathrm{diag}(\alpha^{(j)}_2,\alpha^j_{n_1 +2},\ldots,\alpha^j_n),\ \hbox{for all}\ \  1\leq j\leq m,$$ and there is some $j\in \{1,\ldots, m\}$ for which $\alpha^{(j)}_1 \neq \alpha^{(j)}_2$ ($I_{n}$ denotes the $n\times n$ identity matrix).
We repeat a similar process twice more on  $\mathrm{diag}(\alpha^{(j)}_2,\alpha^j_{n_1 +2},\ldots,\alpha^j_n)$ to find $n_2$, $n_3$ and 
$ \alpha^{(j)}_3:=\alpha^j_{n_1 + n_2 +1}$ so that, $\hbox{for all}\ 1\leq j\leq m$,
$$D^{(j)}=\alpha_1^{(j)} I_{n_1} \oplus  \alpha_2^{(j)} I_{n_2}\oplus \alpha_3^{(j)} I_{n_3}\oplus \mathrm{diag}(\alpha^j_{n_1 +n_2+n_3+1}, \ldots,\alpha^j_n),
$$ while there is some $j$ for which $\alpha^{(j)}_1 \neq \alpha^{(j)}_2$ and some $k$ for which $\alpha^{(k)}_2 \neq \alpha^{(k)}_3$.
\smallskip

If now $\alpha^{(j)}_1 = \alpha^{(j)}_3$,  for all $1\leq j\leq m$, then we may re-order the diagonal entries to amalgamate 
$\alpha_1^{(j)}I_{n_1}$ and $\alpha_3^{(j)} I_{n_3}$, namely, there is an orthogonal permutation matrix  $R \in \mathcal{GL}_n$ with $$ R^TD^{(j)}R=R^{-1}D^{(j)}R=\alpha_1^{(j)} 
I_{n_1+n_3} \oplus  \alpha_2^{(j)} I_{n_2}\oplus \mathrm{diag}(\alpha^j_{n_1 +n_2+n_3+1}, \ldots,\alpha^j_n),$$ $\hbox{ for all}\ 1\leq j\leq m.$

We continue this process of finding $\alpha_l^{(j)}$s for $\mathrm{diag}(\alpha^j_{n_1 +n_2+n_3+1}, \ldots,\alpha^j_n)$. We amalgamate, as described above, where necessary so that 
for some orthogonal $U \in \M_n$ and  $\text{ all} \ 1\leq j\leq m$ we have 
\begin{equation}
\label{block diag1}U^TD^{(j)}U = \alpha_1^{(j)} I_{p_1} \oplus \ldots \oplus\alpha_d^{(j)} I_{p_d}\end{equation}
$\  \text{subject to the condition that for}\  1\leq a<b\leq d,\ \text{there is some}\  j \in \{1,\ldots,m\}$ {with} $\alpha_a^{(j)} \neq \alpha_b^{(j)}.$ Of course, $d \leq n$ and $d$ is as small as possible satisfying the above.

We now write $U^TB(\lambda_0)U$ as a $d\times d$ block matrix, whose $(a,b)$ sub-block, denoted here $[U^TB(\lambda_0)U]_{ab},$ is of size $n_a\times n_b$, for $1\leq a,b\leq d$. Then $U^TB(\lambda_0)U$ commutes with $ U^TD^{(j)}U$, for all $1\leq j\leq m$, since  $B(\lambda_0)$ commutes with $D^{(j)}$, for all $1\leq j\leq m$, and $U$ is orthogonal.  This commutativity then yields a block version of (\ref{eq:off-diag}).

Specifically, since
$$[U^TD^{(j)}U]_{aa}=\alpha_a^{(j)} I_{n_a}$$
 and we have from (\ref{block diag1})
that if $a < b$ then $\alpha_a^{(j)} \neq \alpha_b^{(j)}$, we get 
\begin{equation}
\label{block version}
[U^TB(\lambda_0)U]_{ab} = 0\ \text{if}\ \ a \neq b.
\end{equation}

 In other words, we have a block diagonal decomposition 
\begin{equation}
\label{block diag2}
U^TB(\lambda_0)U = C_1 \oplus \ldots \oplus C_d\,,
\end{equation}
where $C_a \in \mathcal{MS}_{n_a},\,\, a=1, \ldots, d$. 
\newline
As each $C_a$ must be symmetric, we can diagonalize it via a unitary transformation, as in \cite[Corollary 2.6.6 (a)]{Horn}.
In other words, for each $a=1, \ldots, d$ there exists $V_a \in \mathcal{GL}_{n_a}$ unitary and $D_a$ a non-negative diagonal matrix such that
\begin{equation}
\label{block diag3}
V^T_a C_aV_a = D_a \,,\ \ 1\leq a \leq d.\end{equation}
We recall also  that the diagonal entries of $D_a$ are the singular values of $C_a.$

Defining 
$V := V_1 \oplus \ldots \oplus V_d$ and $D := D_1 \oplus \ldots \oplus D_d$ (diagonal) then (\ref{block diag2}) and  (\ref{block diag3}) give
\begin{equation}
\label{block diag4}
V^T\big(U^TB(\lambda_0)U\big)V=D.
\end{equation}

Defining now $Q = PUV$, and since $B(\lambda_0)=P^TA(\lambda_0)P$,  (\ref{block diag4}) implies
$Q^{T} A(\lambda_0) Q = D$. 
In addition,
\begin{align*}
Q^T A_j Q &= V^TU^T(P^T A_j P)UV = V^{T}(U^TB_jU)V\\
&=V^{T}(U^TB(\lambda_0)D^{(j)}U)V,\  \ \hbox{from  (\ref{eq:B_j=BD}) }\\
&=V^{T}(U^TB(\lambda_0)U)(U^TD^{(j)}U)V, \ \hbox{since $U$ is orthogonal}\\
&=\big(V^{T}(U^TB(\lambda_0)U)V\big)(U^TD^{(j)}U),\  \hbox{as $V$ commutes with $U^TD^{(j)}U$ by (\ref{block diag1})}\\
&=D(U^TD^{(j)}U),\  \ \hbox{from  (\ref{block diag4}) }\\
&=D\big(\alpha_1^{(j)} I_{p_1} \oplus \ldots \oplus\alpha_d^{(j)} I_{p_d}\big), \ \hbox{ from  (\ref{block diag1})}
\end{align*}
 which is clearly diagonal for all $j=1, \ldots, m$.
\end{proof}

We recall from Theorem \ref{well} that $A(\lambda_0)^{-1}A_{1}, \ldots, A(\lambda_0)^{-1}A_{m}$ are SDS if, and only if, they are all diagonalizable by similarity and they pairwise commute. It follows  now from Theorem \ref{mainfullrank} that property SDS of the matrices $A(\lambda_0)^{-1}A_{1}, \ldots, A(\lambda_0)^{-1}A_{m}$ is independent of the particular $\lambda_0$ chosen.

~\\

\subsection{The SDC problem for $n\times n$ matrices with arbitrary pencil rank}

\subsubsection{Preliminaries: Diagonal matrices.}

\begin{lemma}
\label{lem:diagonal1}
Let $D_1, \dots,D_m$ be diagonal  matrices in $\M_n$, ${D}$ be the associated linear pencil and $r$ be its maximum pencil rank.
Then the following hold:
\begin{enumerate}
\item[(i)]$D_1, \ldots, D_m$ have zeros in the same $(n-r)$ diagonal positions and 
$$\dim\big(\bigcap_{j=1}^m \mathrm{ker} D_j\big)=n-r.$$
\item[(ii)] There is an orthogonal $Q \in \M_n$ such that $Q^TD_jQ=\widetilde{D}_j\oplus 0_{n-r}$, where $\widetilde{D}_j\in \M_r$ is diagonal, $1\leq j\leq m$.
 \end{enumerate}
\smallskip

Moreover the pencil  $\widetilde{D}$ associated to matrices $\widetilde{D}_1,\ldots,\widetilde{D}_m \in M_r$ is non-singular
\newline (and if $\lambda_0$ satisfies $r=\mathrm{rank} D(\l_0)$ then $\widetilde{D} (\l_0) \in \GL_r$).

\end{lemma}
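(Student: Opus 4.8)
The plan is to prove the three numbered claims essentially simultaneously, since they all flow from understanding the structure of a linear pencil of diagonal matrices. First I would observe that for diagonal $D_j = \mathrm{diag}(d_1^j, \ldots, d_n^j)$ the pencil value $D(\lambda)$ is the diagonal matrix whose $k$-th entry is the linear form $\ell_k(\lambda) := \sum_{j=1}^m \lambda_j d_k^j$. Hence $\mathrm{rank}\, D(\lambda)$ equals the number of indices $k \in \{1,\ldots,n\}$ for which $\ell_k(\lambda) \neq 0$. Call an index $k$ \emph{trivial} if $\ell_k$ is the zero linear form, i.e. if $d_k^j = 0$ for every $j$, which is precisely the condition that the $k$-th standard basis vector lies in $\bigcap_{j=1}^m \mathrm{ker}\, D_j$. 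If there are exactly $s$ trivial indices, then for any $\lambda$ at most $n-s$ entries of $D(\lambda)$ are nonzero, so $r \leq n-s$; conversely, for the nontrivial indices the forms $\ell_k$ are nonzero, so their (finite) union of zero-hyperplanes does not cover $\mathbb{C}^m$, and a generic $\lambda_0$ makes all $n-s$ nontrivial forms nonzero, giving $\mathrm{rank}\, D(\lambda_0) = n-s$. Therefore $s = n-r$, which is (i): the $D_j$ share zeros in exactly the $n-r$ trivial diagonal positions and $\dim\big(\bigcap_{j=1}^m \mathrm{ker}\, D_j\big) = n-r$.

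For (ii), I would take $Q$ to be the permutation matrix that moves the $n-r$ trivial diagonal positions to the last $n-r$ coordinates; a permutation matrix is orthogonal, and conjugating each diagonal $D_j$ by a permutation just permutes its diagonal entries, so $Q^T D_j Q = \widetilde{D}_j \oplus 0_{n-r}$ with $\widetilde{D}_j \in \M_r$ diagonal. The ``moreover'' clause then follows at once: the pencil $\widetilde{D}$ is obtained from $D$ by deleting $n-r$ identically-zero diagonal entries, so for the generic $\lambda_0$ found above (or indeed any $\lambda_0$ achieving $r = \mathrm{rank}\, D(\lambda_0)$) all $r$ remaining linear forms are nonzero, hence $\widetilde{D}(\lambda_0) \in \GL_r$ and the pencil $\widetilde{D}$ is non-singular of rank $r$. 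For the parenthetical remark one only needs that if $r = \mathrm{rank}\, D(\lambda_0)$ then the $r$ nonzero entries of $D(\lambda_0)$ must be exactly the $r$ nontrivial positions (since the trivial ones are forced to be zero and there are exactly $r$ nontrivial ones), so $\widetilde{D}(\lambda_0)$ has all diagonal entries nonzero.

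I do not expect any serious obstacle here; this is a bookkeeping lemma. The one point that deserves care is the ``generic $\lambda_0$'' argument — establishing that the union of the finitely many zero-loci of the nontrivial forms $\ell_k$ cannot be all of $\mathbb{C}^m$, so that a point avoiding all of them exists. This is just the fact that a finite union of proper subspaces (hyperplanes) of $\mathbb{C}^m$ is a proper subset, which is standard. Equivalently, one can invoke Lemma \ref{intersections} together with the elementary observation that for diagonal matrices the inclusion $\bigcap_j \mathrm{ker}\, D_j \subseteq \mathrm{ker}\, D(\lambda_0)$ is an equality once $\lambda_0$ is chosen to have all nontrivial forms nonzero, which is exactly what forces $\dim\big(\bigcap_j \mathrm{ker}\, D_j\big) = n-r$. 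Either route makes (i) immediate and then (ii) and the ``moreover'' clause are purely formal.
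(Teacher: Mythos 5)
Your proof is correct, and it handles the crucial step by a different (and arguably cleaner) route than the paper. You define the ``trivial'' diagonal positions directly as those $k$ with $\ell_k \equiv 0$, then invoke the standard fact that a finite union of proper subspaces of $\mathbb{C}^m$ cannot cover $\mathbb{C}^m$ to produce a $\lambda_0$ avoiding all the zero hyperplanes of the nontrivial forms, giving $r = n - s$ and hence $s = n - r$ at once. The paper instead fixes $\lambda_0$ achieving the maximum rank, observes $\lambda_0 \cdot u_j = 0$ for $j = r+1, \ldots, n$, and then runs a continuity/perturbation argument on $h(\lambda) = \prod_{i=1}^r \lambda \cdot u_i$: small perturbations $\lambda_0 + v$ keep all $r$ forms nonzero, so maximality of $r$ forces $(\lambda_0+v)\cdot u_j = 0$ for all small $v$, which forces $u_j = 0$. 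The two arguments establish the same fact — that the positions killed by a maximum-rank $\lambda_0$ are precisely the identically-zero diagonal positions — but yours goes forward from the definition of ``trivial'' and uses the generic-point principle, whereas the paper's goes backward from $\lambda_0$ and uses a local perturbation; your version avoids the need for the explicit $\epsilon$-ball manipulation. Your direct identification of $\bigcap_j \mathrm{ker}\, D_j$ as the span of the trivial coordinate vectors also sidesteps the paper's appeal to Lemma~\ref{intersections} and inequality~(\ref{kerdim}), though both are perfectly valid. The permutation-matrix construction for (ii) and the argument for the ``moreover'' clause are identical in substance to the paper's.
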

\begin{proof} 

Since $D$ has maximum pencil rank $r$, we choose $\l_0 \in S^{2m-1}$ with $r= \mathrm{rank}{D}(\lambda_0).$
Writing $D_j=\mathrm{\mathrm{diag}}\big(d^j_1,\ldots, d^j_n\big) \in \M_n$, $1\leq j\leq m$, we define vectors
$u_i=\begin{pmatrix}d^1_i\\\vdots\\d^m_i\end{pmatrix} \in \mathbb{C}^m$, for $1\leq i\leq n.$
By direct calculation $${D}(\lambda)=\mathrm{\mathrm{diag}}\big(\lambda\cdot u_1, \ldots, \lambda\cdot u_n\big),\ \hbox{for all}\  \lambda = 
\begin{pmatrix}\lambda_1\\ \vdots \\\lambda_m\end{pmatrix} 
 \in \mathbb{C}^m,$$ where $\cdot$ represents the dot product on $\mathbb{C}^m$ given by 
$\begin{pmatrix}z_1\\ \vdots \\z_m\end{pmatrix} \cdot \begin{pmatrix}w_1\\ \vdots \\w_m\end{pmatrix}= \sum_{i=1}^mz_iw_i.$

Since  $r=\mathrm{rank}{D}(\lambda_0),$ we can then assume without loss of generality (up to rearrangement of the basis vectors)  that $\lambda_0\cdot u_i\neq 0\,, \,\,\, 1\leq i \leq r$ and $\lambda_0 \cdot u_j = 0\,, \,\,\, r+1\leq j \leq n$. In particular, $u_i\neq 0$, for $1\leq i\leq r.$
\newline Define $h:\mathbb{C}^m \longrightarrow \mathbb{C}$ by $h(\lambda) = \Pi_{i=1}^r \lambda\cdot  u_i$. Since $h$ is continuous, the set $A:=h^{-1}(\mathbb{C}\setminus \{0\})$ is open in $\mathbb{C}^m$ and since $\lambda_0 \in A$, we have that for some $s>0$, $\lambda_0+v \in A$ and hence $h(\lambda_0+v) \neq 0$, for all $v \in \mathbb{C}^m, \,\|v\| < s$.
 This gives $\mathrm{rank}{D}(\lambda_0 + v) \geq r$ and since $r$ is the maximum rank of ${D}(\lambda)$, it follows that $\mathrm{rank}{D}(\lambda_0 + v)=r$ and therefore $(\lambda_0+ v) \cdot u_j = 0$, for all $j$ with $r+1 \leq j \leq n$. Thus $v \cdot u_j= 0$, for all $ r+1 \leq j \leq n$, and all $v \in \mathbb{C}^m, \,\|v\| < s$. This is impossible unless $u_j=0$,  for all $r+1 \leq j \leq n$ (otherwise $v=\overline u_j(\frac{s}{2\|u_j\|})$ will give a contradiction). In other words
$$0=(u_j)_k=d^k_j=(D_k)_{jj}, \ \hbox{for all}\ r+1\leq j \leq n \ \hbox{and all}\ 1\leq k \leq m$$
namely, $ D_1, \ldots, D_m$ have zeros in the same $n-r$ diagonal positions. It follows that 
$\dim\big(\bigcap_{j=1}^m \mathrm{ker} D_j\big)\geq n-r.$ On the other hand, Lemma \ref{intersections} and (\ref{kerdim}) then imply $\dim\big(\bigcap_{j=1}^m \mathrm{ker} D_j\big)=n-r$ and $ \bigcap_{j=1}^m \mathrm{ker} D_j= \mathrm{ker} {D}(\lambda_0).$

 (ii) For $\l_0 \in S^{2m-1}$ with $r= \mathrm{rank}{D}(\lambda_0)$, we see in the proof of (i) that there is an orthogonal (permutation) matrix $Q \in \M_n$ satisfying  $Q^TD_jQ=\widetilde{D}_j\oplus 0_{n-r}$, where $\widetilde{D}_j\in \M_r$ is diagonal, $1\leq j\leq m$. Let $\widetilde D$ be the reduced linear pencil associated to $\widetilde{D}_1,\ldots,\widetilde{D}_m.$ Then  $Q^TD(\l_0)Q=\widetilde D(\l_0)\oplus 0_{n-r}$, so $\mathrm{rank}\widetilde D(\l_0)=\mathrm{rank}D(\l_0)=r$ and $\widetilde D(\l_0) \in \GL_r.$
\end{proof}

The next theorem enables us, when considering whether or not a set of $n\times n$ matrices is SDC, to reduce the problem to a set of $r\times r$ matrices, where $r$ is the maximum pencil rank.

\begin{theorem}\label{reducerank}
  Let $A_1, \ldots, A_m \in \MS_n$  have maximum pencil rank $r$.
Then $$ A_1, \ldots, A_m\ \text{are SDC if, and only if}\ \dim\big(\bigcap_{j=1}^m \mathrm{ker} A_j\big)=n-r\ \text{and there exists}\  P \in \GL_n $$ 
$$\ \text{with}\ \ P^TA_jP=\widetilde{D}_j\oplus 0_{n-r},\ \hbox{where}\  \widetilde{D}_j\in \M_r\ \hbox{is diagonal},\ \ \ 1\leq j\leq m.$$ 
\smallskip

Moreover, if either of the above conditions is satisfied,
the pencil  $\widetilde{D}$ associated to matrices $\widetilde{D}_1,\ldots,\widetilde{D}_m \in M_r$ is non-singular
(and if $\lambda_0$ satisfies $r=\mathrm{rank} A(\l_0)$ then $\widetilde{D} (\l_0) \in \GL_r$).
\end{theorem}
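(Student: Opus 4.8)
The plan is to prove both implications, with the nontrivial direction being the forward one: assuming $A_1,\ldots,A_m$ are SDC, one must show the kernel dimension condition holds. So suppose there is $P \in \GL_n$ with $P^T A_j P = D_j$ diagonal for all $j$. Then for the linear pencil, $P^T A(\lambda) P = D(\lambda)$, so $\mathrm{rank}\,A(\lambda) = \mathrm{rank}\,D(\lambda)$ for every $\lambda$, and hence the maximum pencil rank of $D_1,\ldots,D_m$ is also $r$. Now I would invoke Lemma \ref{lem:diagonal1}(i): the diagonal matrices $D_1,\ldots,D_m$ satisfy $\dim\big(\bigcap_j \mathrm{ker}\,D_j\big) = n-r$. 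Since $x \mapsto P x$ (or rather $P^{-1}$) is a linear isomorphism carrying $\bigcap_j \mathrm{ker}\,D_j$ onto $\bigcap_j \mathrm{ker}\,A_j$ — indeed $A_j x = 0 \iff (P^T)^{-1} D_j P^{-1} x = 0 \iff D_j(P^{-1}x) = 0$ — the kernel dimensions agree, giving $\dim\big(\bigcap_j \mathrm{ker}\,A_j\big) = n-r$. For the existence of the required $P$: by Lemma \ref{lem:diagonal1}(ii) applied to $D_1,\ldots,D_m$, there is an orthogonal $Q$ with $Q^T D_j Q = \widetilde{D}_j \oplus 0_{n-r}$, $\widetilde{D}_j \in \M_r$ diagonal; then $PQ \in \GL_n$ satisfies $(PQ)^T A_j (PQ) = \widetilde{D}_j \oplus 0_{n-r}$, which is the desired conclusion.

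For the converse direction: assume $\dim\big(\bigcap_j \mathrm{ker}\,A_j\big) = n-r$ and there is $P \in \GL_n$ with $P^T A_j P = \widetilde{D}_j \oplus 0_{n-r}$, $\widetilde{D}_j \in \M_r$ diagonal. But $\widetilde{D}_j \oplus 0_{n-r}$ is itself a diagonal matrix in $\M_n$, so this is literally the statement that $A_1,\ldots,A_m$ are SDC (with $D_j := \widetilde{D}_j \oplus 0_{n-r}$). So this direction is immediate.

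Finally, for the "moreover" clause: assume either condition holds, so (by the above) there is $P \in \GL_n$ with $P^T A_j P = \widetilde{D}_j \oplus 0_{n-r}$, $\widetilde{D}_j \in \M_r$ diagonal. Then $P^T A(\lambda) P = \widetilde{D}(\lambda) \oplus 0_{n-r}$ for all $\lambda \in \mathbb{C}^m$, so $\mathrm{rank}\,\widetilde{D}(\lambda) = \mathrm{rank}\,A(\lambda) \le r$ always, with equality for any $\lambda_0$ achieving $r = \mathrm{rank}\,A(\lambda_0)$; since $\widetilde{D}(\lambda) \in \M_r$, this says $\widetilde{D}$ has maximum pencil rank $r$, i.e. is non-singular, and $\widetilde{D}(\lambda_0) \in \GL_r$ for such $\lambda_0$. (One should remark that here $\lambda_0$ achieves the maximum pencil rank for the $A_j$; the point is that under the hypothesis this $\lambda_0$ also achieves the maximum rank for $\widetilde{D}$.)

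The main obstacle — though it is modest — is the bookkeeping around the reduction: making sure that the maximum pencil rank is genuinely preserved under congruence of the whole tuple (which is clear since congruence by a fixed $P$ commutes with taking linear combinations and preserves rank), and then correctly transporting the output of Lemma \ref{lem:diagonal1} back through $P$ without disturbing the block structure. No step requires a genuinely new idea; the proof is essentially a two-line deduction from Lemma \ref{lem:diagonal1} once one observes that being SDC for $A_1,\ldots,A_m$ is equivalent, after a congruence, to the $D_j$'s being diagonal, and that the lemma already analyses the diagonal case completely.
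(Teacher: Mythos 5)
Your proof is correct and follows essentially the same route as the paper's: both directions use Lemma \ref{lem:diagonal1} in the same way, forming $P = SQ$ from the SDC matrix and the orthogonal permutation from the lemma. The only cosmetic difference is in establishing $\dim\bigl(\bigcap_j \ker A_j\bigr)=n-r$: you transport $\bigcap_j \ker D_j$ directly through $P$, whereas the paper routes this through $\ker A(\lambda_0)=S(\ker D(\lambda_0))$ and Lemma \ref{intersections}; both are equivalent and your version is if anything slightly more direct.
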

\begin{proof}  Let $A_1, \ldots, A_m \in \MS_n$  have maximum pencil rank $r$. Choose any $\l_0 \in S^{2m-1}$ satisfying $r=\mathrm{rank} A(\l_0)$.

For the forward direction, assume that  $A_1, \ldots, A_m$ are SDC. Then there exists  $S \in \GL_n$ and diagonal  matrices $D_1, \ldots, D_m$ such that 
\begin{equation}\label{above}S^T A_j S = D_j, \quad  j=1, \ldots, m\,.\end{equation}
Let $D$ be the pencil associated to matrices $D_1,\ldots,D_m$.
Then $S^T {A}(\lambda)S = {D}(\lambda),$ for all $\l \in \mathbb{C}^m,$ 
so maximum pencil ranks for $A(\l)$ and $D(\l)$ agree and 
$r=\mathrm{rank} A(\l_0)=\mathrm{rank} D(\l_0).$ From Lemma \ref{lem:diagonal1} (ii) there then exists 
 an orthogonal $Q \in \M_n$ such that $Q^TD_jQ=\widetilde{D}_j\oplus 0_{n-r}$, where $\widetilde{D}_j\in \M_r$ is diagonal, $1\leq j\leq m$ and  $\widetilde{D}(\lambda_0)\in \GL_r$. Then $P=SQ$ gives 
$$P^TA_jP=\widetilde{D}_j\oplus 0_{n-r},$$ for $ 1\leq j\leq m$ as desired and for pencil $\widetilde{D}$ associated to matrices $\widetilde{D}_1,\ldots,\widetilde{D}_m$ clearly $r=\mathrm{rank} \widetilde{D}(\l_0)$. 

Since
$S^T {A}(\lambda_0)S = {D}(\lambda_0)$ we have 
 $\mathrm{ker} {A}(\lambda_0) =S(\mathrm{ker} {D}(\lambda_0))$. On the other hand, from Lemma \ref{lem:diagonal1} (i), Lemma \ref{intersections} and (\ref{above}) we have  $S(\mathrm{ker}{D}(\lambda_0))=S(\bigcap_{j=1}^m {\ker} D_j )=\bigcap_{j=1}^m {\ker} A_j$. In other words 
$\bigcap_{j=1}^m {\ker} A_j=\mathrm{ker} {A}(\lambda_0)$ has dimension $n-r$.
The opposite direction is trivial.\\
\end{proof}

\subsubsection{The general case of non-diagonal matrices with arbitrary pencil rank}~\\
~

The following Lemma holds regardless of diagonalizability and is key to solving the SDC problem in the general case.

\begin{lemma}\label{kernels2}
	 Let $A_1, \ldots, A_m \in \MS_n$  have maximum pencil rank $r.$ Then  $$\dim(\bigcap_{j=1}^m \mathrm{ker} A_j)=n-r\ \hbox{ if, and only if,}$$ there exists $Q \in \GL_n$ with  
\begin{equation}\label{dagger2}
Q^TA_jQ=\widetilde{A}_j\oplus 0_{n-r},\ \hbox{where}\  \widetilde{A}_j\in \MS_r,\ \ 1\leq j\leq m.
\end{equation}
\smallskip

Moreover, if either of the above conditions is satisfied, the pencil $\widetilde{A}$ associated to matrices $\widetilde{A}_1,\ldots,\widetilde{A}_m \in M_r$ is non-singular
(and if $\lambda_0$ satisfies $r=\mathrm{rank} A(\l_0)$ then $\widetilde{A} (\l_0) \in \GL_r$).
\end{lemma}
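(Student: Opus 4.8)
The plan is to prove the easy direction first and then the substantive one. If such a $Q \in \GL_n$ exists with $Q^TA_jQ = \widetilde{A}_j \oplus 0_{n-r}$ for all $j$, then for any $\lambda$ we have $Q^TA(\lambda)Q = \widetilde A(\lambda) \oplus 0_{n-r}$, so $\mathrm{rank}\,A(\lambda) = \mathrm{rank}\,\widetilde A(\lambda) \le r$ with equality somewhere (since the maximum pencil rank of the $A_j$ is $r$); hence $\widetilde A$ has maximum pencil rank $r$, i.e. $\widetilde A(\lambda_0) \in \GL_r$ whenever $\mathrm{rank}\,A(\lambda_0)=r$, which simultaneously proves the "Moreover" clause. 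Also $\bigcap_j \ker A_j = Q(\bigcap_j \ker(\widetilde A_j \oplus 0_{n-r})) \supseteq Q(\{0\}\oplus \mathbb{C}^{n-r})$, which has dimension $n-r$; combined with the general bound \eqref{kerdim}, $\dim(\bigcap_j \ker A_j) = n-r$.

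For the forward direction, assume $\dim(\bigcap_{j=1}^m \ker A_j) = n-r$. By Lemma \ref{intersections}, $\bigcap_j \ker A_j = \ker A(\lambda_0)$ for any $\lambda_0 \in S^{2m-1}$ with $\mathrm{rank}\,A(\lambda_0) = r$. Write $K := \ker A(\lambda_0)$, a subspace of dimension $n-r$. The first step is to choose a basis of $\mathbb{C}^n$ adapted to $K$: pick a complementary subspace $W$ with $\mathbb{C}^n = W \oplus K$, and let $Q_1 \in \GL_n$ be the change-of-basis matrix whose last $n-r$ columns span $K$ and whose first $r$ columns span $W$. Then for each $j$, since $K \subseteq \ker A_j$, every vector in $K$ is annihilated by $A_j$, so in the new basis the last $n-r$ columns of $Q_1^TA_jQ_1$ vanish; since $A_j$ is symmetric, $Q_1^TA_jQ_1$ is symmetric, so its last $n-r$ rows vanish too. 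Hence $Q_1^TA_jQ_1 = \widetilde{A}_j \oplus 0_{n-r}$ with $\widetilde{A}_j \in \MS_r$, which is exactly \eqref{dagger2} with $Q = Q_1$. Here I should double-check that $K \subseteq \ker A_j$ for each individual $j$ — this holds because $\bigcap_j \ker A_j = K$ forces $K \subseteq \ker A_j$ for every $j$.

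The main obstacle — really the only delicate point — is confirming that the matrix $\widetilde A$ so obtained genuinely has maximum pencil rank $r$ (the "Moreover" assertion), since the reduction so far only used $K \subseteq \ker A_j$ and did not a priori exploit that $r$ is maximal. This follows as in the converse direction above: $Q_1^TA(\lambda)Q_1 = \widetilde A(\lambda)\oplus 0_{n-r}$ for all $\lambda$, so $\mathrm{rank}\,\widetilde A(\lambda) = \mathrm{rank}\,A(\lambda) \le r = \dim \mathbb{C}^r$ always, with equality at $\lambda_0$; thus $\widetilde A(\lambda_0) \in \GL_r$ and $\widetilde A$ is non-singular. This closes the proof; all steps are elementary linear algebra once the identification $\bigcap_j \ker A_j = \ker A(\lambda_0)$ from Lemma \ref{intersections} is in hand, and the symmetry of the $A_j$ is what upgrades "zero columns" to the block decomposition $\widetilde A_j \oplus 0_{n-r}$.
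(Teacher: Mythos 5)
Your proof is correct and follows essentially the same route as the paper: build $Q$ from a basis adapted to $\bigcap_j \ker A_j$, note that the last $n-r$ columns of $Q^TA_jQ$ vanish, and upgrade to the block form $\widetilde{A}_j \oplus 0_{n-r}$ using symmetry; the converse and the "Moreover" clause follow from the rank bound \eqref{kerdim} applied to $Q^TA(\lambda)Q = \widetilde{A}(\lambda) \oplus 0_{n-r}$. The only (harmless) difference is your initial appeal to Lemma \ref{intersections} to identify $\bigcap_j \ker A_j$ with $\ker A(\lambda_0)$, which the rest of your argument never actually uses — as you yourself note, $K \subseteq \ker A_j$ holds directly from $K = \bigcap_j \ker A_j$, and the paper's proof works with $\bigcap_j \ker A_j$ directly without that detour.
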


\begin{proof}
 Let $A_1, \ldots, A_m \in \MS_n$  have maximum pencil rank $r$.  

In the forward direction, assume that $\mathcal{V}:=\bigcap_{j=1}^m \mathrm{ker} A_j$ has $\dim(\mathcal{V})=n-r.$ 
Choose a basis $v_{r+1}, \ldots, v_n$ of $\mathcal{V}$ and extend by vectors $v_1, \ldots,v_r$ to get a  basis $v_1, \ldots, v_n$  of $\mathbb{C}^n.$ Let $Q\in \GL_n$ be the matrix whose $i$th column is given by the vector $v_i$. 
For $r+1\leq i\leq n$, we have $v_i\in \ker A_j$ and hence  $Q^TA_jQ(e_i)=Q^T A_j(v_i)=0$, for all $1\leq j\leq m$ (and $e_i$ is the column vector with $1$ in the $i$th position and all other entires $0$).
In other words, columns $r+1$ to $n$ of $Q^TA_jQ$ are identically zero and, since $Q^TA_jQ$ is symmetric, it follows that 
\begin{equation}\label{arbA}Q^TA_jQ=\widetilde{A}_j\oplus 0_{n-r},\ \hbox{where}\  \widetilde{A}_j\in \MS_r,\ 1\leq j\leq m\end{equation} as desired.
In the opposite direction, assume that  (\ref{dagger2}) holds for some $Q \in \GL_n$. Then 
$Q\big(0_r \oplus \mathbb{C}^{n-r}\big)\subseteq \bigcap_{j=1}^m \mathrm{ker} A_j$ so $\dim(\bigcap_{j=1}^m \mathrm{ker} A_j)\geq n-r$. Equality now follows from (\ref{kerdim}).

Finally, if the conditions in the statement hold,  the reduced pencil $\widetilde{A}$ associated with 
$\widetilde{A}_1,\ldots, \widetilde{A}_m \in M_r$ has maximum pencil rank $r$, since for any $\l_0 \in \mathbb{C}^m$ with $r=\mathrm{rank} A(\l_0)$ then (\ref{arbA}) implies
that $\mathrm{rank} \widetilde{A}(\l_0)=r$ and $\widetilde{A}(\l_0)\in \GL_r.$
\end{proof}
\begin{remark} 
We note that by choosing the basis vectors $v_1, \ldots, v_n$ in the above proof  to be orthogonal, with respect to the complex inner product $<z,w>:=z\cdot \overline w$ on $\mathbb{C}^n$, $Q$ can be chosen to be unitary.
\end{remark}

Lemma \ref{kernels2} therefore allows us to find  matrices $ \widetilde{A}_1,\ldots,  \widetilde{A}_m$ satisfying (\ref{dagger2}) using only the kernels of the $A_j$.  This enables us, subject to the condition that
$\dim\big(\bigcap_{j=1}^m \mathrm{ker} A_j\big)=n-r,$  to reduce the dimension of the problem by proving that 
 $$A_1, \ldots, A_m\ \text{are SDC} \ \hbox{in}\ M_n  \ \hbox{if, and only if,}\   \widetilde{A}_1,\ldots,  \widetilde{A}_m
\ \text{are SDC} \ \hbox{in}\ M_r.$$Theorem \ref{mainfullrank} then motivates 
 the following definition.

\begin{definition}[Reduced maximal-rank matrices]\label{reducedrank} Let $A_1, \ldots, A_m \in \MS_n$ have maximum pencil rank $r$ and satisfy $\dim(\bigcap_{j=1}^m \mathrm{ker} A_j)=n-r$. Let $ \widetilde{A}_1,\ldots,  \widetilde{A}_m$ be as in 
(\ref{dagger2}) and fix  $\lambda_0\in S^{2m-1}$ with $r=rank A(\l_0)$. Reduced pencil $ \widetilde{A}$ then has  $\widetilde{A}(\l_0)\in \GL_r$.

We define the $r\times r$ matrices
\begin{equation}
\label{eq:Ldef}
L_j \big(=L_j(\lambda_0)\big):={\widetilde{A}(\lambda_0)}^{-1} \widetilde{A}_j, \quad 1\leq j\leq m.
\end{equation}

\end{definition}

\begin{remark}
 $L_1, \ldots, L_m$ are not symmetric in general and $\sum_{j=1}^m (\lambda_0)_j L_j = I_n$.
In addition, Theorem \ref{mainfullrank} states that $L_1, \ldots, L_m$ are SDS if, and only if, $ \widetilde{A}_1,\ldots,  \widetilde{A}_m$ are SDC and consequently the condition is independent of the particular
$\lambda_0$ chosen in the definition. For this reason we write $L_j$ instead of $L_j(\lambda_0)$ . 
\end{remark}

The following is our main theorem.

\begin{theorem}\label{main}
 Let $A_1, \ldots, A_m \in \MS_n$ have maximum pencil rank $r$. Then $$A_1, \ldots, A_m\ \text{are SDC if, and only if,}\ \dim(\bigcap_{j=1}^m \mathrm{ker} A_j)=n-r\ \text{and}\ 
L_1, \ldots, L_m\ \text{ are SDS}$$
where $L_1, \ldots, L_m$ are as in Definition \ref{reducedrank} above.
\end{theorem}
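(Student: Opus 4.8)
The plan is to chain together the two reductions already established in the excerpt. Theorem \ref{main} is essentially the composition of Lemma \ref{kernels2} (reduce from $n\times n$ to $r\times r$ while preserving SDC) and Theorem \ref{mainfullrank} (for full-rank pencils, SDC of the reduced matrices is equivalent to SDS of the pencil-normalised matrices $L_j$). So the proof is really a matter of bookkeeping: verifying that the SDC property genuinely transfers between $A_1,\ldots,A_m$ and the reduced matrices $\widetilde A_1,\ldots,\widetilde A_m$, and then invoking Theorem \ref{mainfullrank}.

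First I would dispose of the forward direction, which is the easy half. If $A_1,\ldots,A_m$ are SDC then Theorem \ref{reducerank} immediately gives $\dim\big(\bigcap_{j=1}^m \mathrm{ker} A_j\big)=n-r$ and produces $P\in\GL_n$ with $P^TA_jP=\widetilde D_j\oplus 0_{n-r}$, the $\widetilde D_j\in\M_r$ diagonal and $\widetilde D$ non-singular. Next I would argue that the matrices $\widetilde A_j$ coming from Lemma \ref{kernels2} (for some fixed $Q\in\GL_n$) are SDC as $r\times r$ matrices: both $Q$ and $P$ restrict the $A_j$ to the same $r$-dimensional complement of $\bigcap_{j=1}^m\mathrm{ker}A_j$, so $Q^{-1}P$ has block form $\binom{*\ *}{0\ *}$, and comparing the top-left $r\times r$ blocks of $Q^TA_jQ$ and $P^TA_jP$ shows the $\widetilde A_j$ and the $\widetilde D_j$ are simultaneously congruent via the invertible top-left block. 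Hence $\widetilde A_1,\ldots,\widetilde A_m$ are SDC; since $\widetilde A(\lambda_0)\in\GL_r$, Theorem \ref{mainfullrank} says $L_j=\widetilde A(\lambda_0)^{-1}\widetilde A_j$ are SDS.

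For the converse, assume $\dim\big(\bigcap_{j=1}^m \mathrm{ker} A_j\big)=n-r$ and $L_1,\ldots,L_m$ are SDS. Lemma \ref{kernels2} (forward direction) gives $Q\in\GL_n$ with $Q^TA_jQ=\widetilde A_j\oplus 0_{n-r}$ and $\widetilde A(\lambda_0)\in\GL_r$; Theorem \ref{mainfullrank} (opposite direction) turns SDS of $L_j$ into SDC of $\widetilde A_1,\ldots,\widetilde A_m$, say $R^T\widetilde A_jR=\widetilde D_j$ diagonal for some $R\in\GL_r$. Then $P:=Q\,(R\oplus I_{n-r})\in\GL_n$ satisfies $P^TA_jP=\widetilde D_j\oplus 0_{n-r}$, which is diagonal, so $A_1,\ldots,A_m$ are SDC.

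I expect the only genuine subtlety to be the first half of the forward direction, i.e. checking that the reduced matrices $\widetilde A_j$ are SDC — one must be careful that the congruence $Q$ of Lemma \ref{kernels2} and the congruence $P=SQ'$ of Theorem \ref{reducerank} are compatible, and the cleanest way is to observe that both block-decompositions are forced by the common subspace $\bigcap_{j=1}^m\mathrm{ker}A_j$ (on which every $A_j$ vanishes) together with symmetry, so the change of basis between them is block upper-triangular and its top-left $r\times r$ block is invertible. Everything else is a direct substitution, and the statement about non-singularity of $\widetilde A$ is already contained in Lemma \ref{kernels2}.
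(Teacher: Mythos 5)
Your outline matches the paper's proof route closely: the forward direction combines Theorem \ref{reducerank} (giving $P$) and Lemma \ref{kernels2} (giving $Q$), examines $R=Q^{-1}P$, and invokes Theorem \ref{mainfullrank}; the converse chains Lemma \ref{kernels2}, Theorem \ref{mainfullrank}, and $P=Q(R\oplus I_{n-r})$.

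Two small points on the forward step. The claim that ``both $Q$ and $P$ restrict the $A_j$ to the same $r$-dimensional complement'' is not correct: the first $r$ columns of $P$ and $Q$ may span different complements of $\bigcap_j\ker A_j$. What \emph{is} true is that the last $n-r$ columns of each span $\bigcap_j\ker A_j$, hence $R=Q^{-1}P$ sends $\mathrm{span}\{e_{r+1},\ldots,e_n\}$ to itself and is block \emph{lower}-triangular (top-right block zero), not upper-triangular as you wrote. Your conclusion --- that the top-left block $S$ of $R$ is invertible and yields $S^T\widetilde A_jS=\widetilde D_j$ --- survives this correction, but the paper sidesteps any triangularity claim entirely: it writes $R$ as an arbitrary $2\times2$ block matrix, reads $S^T\widetilde A_jS=\widetilde D_j$ directly from the $(1,1)$-block of $R^T(\widetilde A_j\oplus 0_{n-r})R=\widetilde D_j\oplus 0_{n-r}$, and obtains invertibility of $S$ from $S^T\widetilde A(\lambda_0)S=\widetilde D(\lambda_0)$ with $\widetilde A(\lambda_0),\widetilde D(\lambda_0)\in\GL_r$, so nothing about the remaining blocks of $R$ is needed.
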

\begin{proof}
Let $A_1, \ldots, A_m \in \MS_n$ have maximum pencil rank $r$ and choose $\lambda_0\in S^{2m-1}$ satisfying $r=rank A(\l_0)$.

In the forward direction, assume now that $A_1, \ldots, A_m$ are SDC.

 From Theorem \ref{reducerank} $\ \dim(\bigcap_{j=1}^m \mathrm{ker} A_j)=n-r$ and 
there exists $P \in \GL_n$ such that 
\begin{equation}\label{one}
P^TA_jP=\widetilde{D}_j\oplus 0_{n-r},\ \hbox{where}\  \widetilde{D}_j\in \M_r\ \hbox{is diagonal},\ \ \ 1\leq j\leq m.
\end{equation} 
In addition, if  $\widetilde{D}$ is the pencil associated to $r\times r$ matrices $\widetilde{D}_1,\ldots,\widetilde{D}_m$ then $\widetilde{D}(\lambda_0) \in \GL_r$.
Lemma \ref{kernels2} then gives $Q \in \GL_n$ with 
\begin{equation}\label{two}Q^TA_jQ=\widetilde{A}_j\oplus 0_{n-r},\ \hbox{where}\  \widetilde{A}_j\in \M_r,\ \ \  1\leq\  j\leq m
\end{equation}
and 
$\widetilde{A}(\lambda_0)\in \GL_r$, for reduced pencil 
$\widetilde{A}(\lambda_0) = \sum_{j=1}^m (\lambda_{0})_j \widetilde{A}_j$.

Thus for $R=Q^{-1}P$ (\ref{one}) and (\ref{two}) give
\begin{equation}\label{three}R^T\left(\widetilde{A}_j\oplus 0_{n-r}\right)R=\widetilde{D}_j\oplus 0_{n-r},\  1\leq\  j\leq m.\end{equation}
Writing $R$ as a block matrix, 
$$R = \left(
\begin{array}
[c]{cc}%
S & T\\
U &  V
\end{array}
\right),$$
for $S \in \M_r, V\in \M_{n-r}, U \in \M_{n-r,r}, T \in \M_{r,n-r}$, it follows from (\ref{three}) and matrix multiplication that 
\begin{equation}\label{four}S^T \widetilde{A}_j S=\widetilde{D}_j,\ \hbox{for}\ 1\leq j\leq m.
\end{equation} 
Then for reduced 
$r \times r$ matrix pencils (and $\l=(\l_1,\ldots,\l_m) \in \mathbb{C}^m$)
$$\widetilde{A}(\lambda) = \sum_{j=1}^m \lambda_j \widetilde{A}_j\ \hbox{and}\ \widetilde{D}(\lambda) = \sum_{j=1}^m \lambda_j \widetilde{D}_j$$
  (\ref{four}) gives
\begin{equation}\label{five}S^T \widetilde{A}(\lambda) S=\widetilde{D}(\lambda),\end{equation} and, in particular,
\begin{equation}\label{six}S^T \widetilde{A}(\lambda_0) S=\widetilde{D}(\lambda_0).
\end{equation} 
Since $\widetilde{A}(\lambda_0)$ and $\widetilde{D}(\lambda_0)$ are invertible, it follows that 
$S$ is invertible and 
combining (\ref{four}) and  (\ref{six}) gives 
$${\widetilde{D}(\lambda_0)}^{-1} \widetilde{D}_j = S^{-1} {\widetilde{A}(\lambda_0)}^{-1}{(S^T)}^{-1}S^T \widetilde{A}_j S =  S^{-1} {\widetilde{A}(\lambda_0)}^{-1} \widetilde{A}_j S \,,\ 1\leq j\leq m.$$
In particular, $ S^{-1} {\widetilde{A}(\lambda_0)}^{-1} \widetilde{A}_j S$ are diagonal for all $j=1, \ldots, m$. 
In other words the $r\times r$ matrices 
$$L_j ={\widetilde{A}(\lambda_0)}^{-1} \widetilde{A_j},\ \hbox{for}\ \ 1\leq j\leq m$$ are SDS and we are done.\\
\smallskip

For the opposite direction, let us assume that $\dim(\bigcap_{j=1}^m \mathrm{ker} A_j)=n-r$ and that $L_1, \ldots, L_m\ \text{ are SDS}$. 
Then from Lemma \ref{kernels2} there exists $Q \in \GL_n$ such that
$$Q^T A_j Q = \widetilde{A}_j \oplus 0_{n-r}, \qquad j=1, \ldots, m,$$
with $\widetilde{A}_j \in \MS_r$ and $\widetilde{A}(\l_0) \in \GL_r \cap \MS_r$. Construct $L_j = {\widetilde{A}(\lambda_0)}^{-1} \widetilde{A}_j, \quad j=1, \ldots, m$ as in Definition \ref{reducedrank} above. By hypothesis, these matrices are SDS so from Theorem  \ref{mainfullrank} it follows that $\widetilde{A}_1, \ldots \widetilde{A}_m$ are SDC, namely, there exists $P\in \GL_r$ such that 
$P^T \widetilde{A}_j P = D_j$, for $D_j$ diagonal in $\M_r$, for all $j=1, \ldots, m$. Define $R := P\oplus I_{n-r}
 \in \GL_n$. Then 
 $$R^T (Q^T A_j Q) R = (P^T \widetilde{A}_j P) \oplus 0_{n-r} = D_j \oplus 0_{n-r}, \quad j=1, \ldots, m.$$
Thus, for $S = QR \in \GL_n$ we have
$$S^T A_j S = D_j \oplus 0_{n-r}\,,$$ 
diagonal for all $j=1, \ldots, m$. Thus, $A_1, \ldots, A_m$ are SDC.
\end{proof}
~

\subsection{A procedure to solve the SDC problem}
~\\

The above results allow us now to determine in a finite number of steps whether or not a set of matrices are SDC. Given $A_1, \ldots, A_m \in \MS_n$, let  $r:=\mathrm{rank}\big(A_1,\ldots,A_m\big)$. From (\ref{kerdim}), we have $$\dim\big(\bigcap_{j=1}^m \mathrm{ker} A_j\big) \leq n-r$$ and  Theorems  \ref{main} and \ref{well} now give us the following procedure.
\medskip

\begin{enumerate}
\item  If $\ \dim(\bigcap_{j=1}^m \mathrm{ker} A_j)<n-r$ then $A_1, \ldots, A_m$ are not SDC. 

\item   If $\ \dim(\bigcap_{j=1}^m \mathrm{ker} A_j)=n-r$ then we calculate $L_1, \ldots, L_m$ from Eq.~(\ref{eq:Ldef}), for some $\l_0 \in \mathbb{C}^m$ with $r=\mathrm{rank} A(\l_0)$. If $L_1, \ldots, L_m$ do not pairwise commute then $A_1, \ldots, A_m$ are not SDC. 

\item  If $\ \dim(\bigcap_{j=1}^m \mathrm{ker} A_j)=n-r$, $L_1, \ldots, L_m$ do pairwise commute, and if each $L_1, \ldots, L_m$ is diagonalizable by similarity then   $A_1, \ldots, A_m$  are SDC. Otherwise (that is, if any one $L_j$ is not diagonalizable by similarity)  $A_1, \ldots, A_m$ are not SDC. 
\end{enumerate}
\medskip
\begin{remark}
 Regarding (1) above, we note that estimation may be sufficient to determine if $\ \dim(\bigcap_{j=1}^m \mathrm{ker} A_j)<n-r$, as in Example \ref{second} below. Regarding (2), since $\sum_{j=1}^m (\lambda_0)_j L_j = I_n$, it suffices to check if $m-1$ of $L_1, \ldots, L_m$ pairwise commute. Regarding (3), we recall that $L_i$ are not symmetric in general.
\end{remark}

\begin{example}[$n=2,m=2$]\label{complexneeded} Let    
$A_1 = \left(
\begin{array}
[c]{cc}%
0 & 1\\
1 & 1
\end{array}
\right), \,\, A_2 = \left(
\begin{array}
[c]{cc}%
1 & 1\\
1 & 0
\end{array}
\right)\,.$ We apply the above procedure.
\begin{enumerate}
\item $\mathrm{ker}{A_1} = \mathrm{ker}{A_2} = \{0\}$ so 
 $\dim(\bigcap_{j=1}^2\mathrm{ker} A_j)=0.$
As $A_1$ is nonsingular we take $\lambda_0 = (1,0)$ so $A(\lambda_0) = A_1$ and  $\mathrm{rank}{A_1} = 2$. Therefore $r=2$ and $\ \dim(\bigcap_{j=1}^m \mathrm{ker} A_j)=n-r$ holds and we continue to the next step.
\item We compute $L_1 = A_1^{-1} A_1 = I_2$ and $L_2 = A_1^{-1} A_2 = \left(
\begin{array}
[c]{cc}%
0 & -1\\
1 & 1
\end{array}
\right)$. These matrices (trivially) commute so we continue to the next step.
\item $L_1$ is diagonal. $L_2$ is diagonalizable by similarity as it has $2$ different eigenvalues: $d_{\pm} = (1\pm i \sqrt{3})/2$. Therefore the matrices $A_1, A_2$ are SDC. 
\end{enumerate}

Explicitly, $P^{-1} L_2 P = \mathrm{\mathrm{diag}}(d_+, d_-)$ with $P = \left(
\begin{array}
[c]{cc}%
 a d_- & b d_+ \\
- a & -b
\end{array}
\right)$
for any $a, b \in \mathbb{C}$ with $ab \neq 0$. Note that $P$ cannot be made real by any choice of the constants $a,b$. We have, finally, 
$$P^T A_1 P = i \sqrt{3} \, \mathrm{\mathrm{diag}}(a^2, -b^2), \qquad P^T A_2 P = i \sqrt{3} \, \mathrm{\mathrm{diag}}(a^2 d_+, -b^2 d_-).$$

\end{example}

\begin{example}[$n=3,m=2$]\label{second}
Let    
$A_1 = \left(
\begin{array}
[c]{ccc}%
1 & 1 & 0\\
1 & 0 & 0\\
0 & 0 & 0
\end{array}
\right) , \,\,\, A_2 = \left(
\begin{array}
[c]{ccc}%
0 & 0 & 1\\
0 & 0 & 0\\
1 & 0 & 0
\end{array}
\right) \,.$ 

We apply the above procedure.
\begin{enumerate}
\item We calculate $\mathrm{ker} A_1 = \mathrm{span}\{(0,0,1)^T\}$ and $\mathrm{ker} A_2 = \mathrm{span}\{(0,1,0)^T\}$.  Thus $\mathrm{ker}{A_1} \cap \mathrm{ker}{A_2} = \{0\}$ so
$\dim(\bigcap_{j=1}^2\mathrm{ker} A_j)=0.$

Since
$$\det A(\lambda) =  \det \left(
\begin{array}
[c]{ccc}%
\lambda_1 & \lambda_1& \lambda_2\\
\lambda_1 & 0 & 0\\
\lambda_2 & 0 & 0
\end{array}
\right) = 0 \quad \text{for all} \quad \lambda \in \mathbb{C}^{2}\,,$$
we have  $r\leq 2$ and then $n-r\geq 1$. Therefore $\ \dim(\bigcap_{j=1}^2 \mathrm{ker} A_j)<n-r$ and hence $A_1, A_2$ are not SDC.  
\end{enumerate}

\end{example}

\section{Discussion}
\label{sec:4}

In this paper we solved the long-standing problem of simultaneous diagonalization via congruence in the complex {}{symmetric} case, providing also an explicit set of steps to solve this problem.  {The complex case has  applications in signal processing, in particular to the problem of blind source separation. This latter problem is based on the exact SDC problem but, due to experimental and numerical errors in obtaining the target matrices $A_1, \ldots, A_m$, it relies on the so-called approximate joint diagonalization, which is an optimisation problem. Our results could shed light on these approximate problems, as these problems usually consider an ad-hoc cost function \cite{belouchrani1997blind, yeredor2002non}, which does not take into account the kernels of the target matrices.}

Some optimisation-related applications consider the special case where, in the context of our Definition \ref{def:1}, the symmetric matrices $A_1, \ldots, A_m$ are real, and the corresponding transformation matrix $P$ and resulting diagonal matrices $D_1, \ldots, D_m$ are required to be real. In the context of Theorem \ref{main} above, such a case would impose extra conditions of realness on the eigenvectors and eigenvalues of the reduced matrices $L_1, \ldots, L_m$. 

In many applications in genetics the matrices $L_1, \ldots, L_m$ turn out to commute, but may not necessarily be diagonalizable. Thus, the SDC problem could be relaxed to a weaker problem, namely that of simultaneous block diagonalization \cite{Uhlig73}. 

{Further research on building an algorithm to solve the SDC problem will focus on developing an efficient method for finding $\lambda_0$ such that the pencil $A(\lambda_0)$ has maximum rank.}
\medskip

{\bf Acknowledgments}
{\it The authors thank the referees for several helpful comments and suggestions and for bringing key references to our attention.}

\end{document}